\documentclass{article}

\usepackage{microtype}
\usepackage{graphicx}
\usepackage{subfigure}
\usepackage{booktabs} % for professional tables

\usepackage{hyperref}

\usepackage[accepted]{icml2025}

\usepackage{amsmath}
\usepackage{amssymb}
\usepackage{mathtools}
\usepackage{amsthm}

\usepackage[capitalize,noabbrev]{cleveref}

\theoremstyle{plain}
\newtheorem{theorem}{Theorem}[section]

\theoremstyle{definition}
\newtheorem{definition}[theorem]{Definition}

\theoremstyle{remark}

\usepackage[textsize=tiny]{todonotes}
\usepackage{orcidlink}

\icmltitlerunning{A Hybrid Relaxation-Heuristic Framework for Solving MIP with Binary Variables}

\begin{document}

\twocolumn[
\icmltitle{A Hybrid Relaxation-Heuristic Framework for Solving MIP with Binary Variables}

\begin{icmlauthorlist}
\icmlauthor{Zayn Wang\orcidlink{0009-0006-4133-8181}}{sch}
\end{icmlauthorlist}

\icmlaffiliation{sch}{New York University, New York, New York}

\icmlcorrespondingauthor{Zayn Wang}{zw3858@nyu.edu}

% You may provide any keywords that you
% find helpful for describing your paper; these are used to populate
% the "keywords" metadata in the PDF but will not be shown in the document
\icmlkeywords{Mixed-Integer Programming (MIP), Linear Relaxation, Lagrange Relaxation, Heuristic Optimization, Portfolio Optimization}

\vskip 0.3in
]

% this must go after the closing bracket ] following \twocolumn[ ...

% This command actually creates the footnote in the first column
% listing the affiliations and the copyright notice.
% The command takes one argument, which is text to display at the start of the footnote.
% The \icmlEqualContribution command is standard text for equal contribution.
% Remove it (just {}) if you do not need this facility.

\printAffiliationsAndNotice{}  % leave blank if no need to mention equal contribution
%\printAffiliationsAndNotice{\icmlEqualContribution} % otherwise use the standard text.

\begin{abstract}
    Mixed-Integer Programming (MIP), particularly Mixed-Integer Linear Programming (MILP) and Mixed-Integer Quadratic Programming (MIQP), has found extensive applications in domains such as portfolio optimization and network flow control, which inclusion of integer variables or cardinality constraints renders these problems NP-hard, posing significant computational challenges. While traditional approaches have explored approximation methods like heuristics and relaxation techniques (e.g. Lagrangian dual relaxation), the integration of these strategies within a unified hybrid framework remains underexplored. In this paper, we propose a generalized hybrid framework to address MIQP problems with binary variables, which consists of two phases: (1) a Mixed Relaxation Phase, which employs Linear Relaxation, Duality Relaxation, and Augmented Relaxation with randomized sampling to generate a diverse pre-solution pool, and (2) a Heuristic Optimization Phase, which refines the pool using Genetic Algorithms and Variable Neighborhood Search (VNS) to approximate binary solutions effectively. Becuase of the page limit, we will only detailedly evaluate the proposed framework on portfolio optimization problems using benchmark datasets from the OR Library, where the experimental results demonstrate state-of-the-art performance, highlighting the framework's ability to solve larger and more complex MIP problems efficiently. This study offers a robust and flexible methodology that bridges relaxation techniques and heuristic optimization, advancing the practical solvability of challenging MIP problems.
\end{abstract}

\section{Introduction}
\label{intro}

\subsection{Mixed-Integer Programming}

The Mixed-Integer Programming (MIP), especially Mixed-Integer Linear Programming (MILP) and Mixed-Integer Quadratic Programming (MIQP), is widely constructed and applied in reality. For example, the most recent advancements include a real-time, mixed-integer programming-based decision-making system for automated driving \citep{quirynen2024realtime}, and optimization in scheduling, such as solving job-shop scheduling problems \citep{ajagekar2022hybrid}. MIQP is also increasingly relevant in machine learning, where it is used for solving decision problems in which the objective function is a machine learning model \citep{anderson2020strong}, processing images and conducting gap analysis \citep{wang2022emotion, wang2024enhancing}, and embedding the model as a layer within neural networks to improve training outcomes \citep{ferber2020mipaal}. However, these are both NP-hard problems caused by the integer variables, or cardinality constraints. The complexity of MIP would increase exponationally due to the property of NP-Hardness. Because of it, people mainly come out with two methods, exact relax methods and heurestic methods, to approach the solution. In the following paper, we will focus on the most widely applied MIP, which is Linear or Quadratic with Binary constraints. And we will regard these problem as MIP for further anlaysis and solving.

Numerous heuristic and meta-heuristic approaches have been applied to MIP optimization. \citet{woodside-oriakhi2011heuristic} explored Genetic Algorithms, Tabu Search, and Simulated Annealing metaheuristics, achieving better results than previous heuristic methods. Building on this, \citet{deng2012markowitzbased} introduced an Improved Particle Swarm Optimization (PSO) technique, which enhanced the robustness and effectiveness, particularly in low-variance conditions. Similarly, \citet{lwin2013hybrid} achieved competitive results using a Hybrid Algorithm. Further advancements came with the Artificial Bee Colony (ABC) Algorithm, where \citet{tuba2014artificial} demonstrated a smaller Euclidean distance between solutions compared to prior methods. This approach was later refined by \citet{kalayci2017artificial}, who added a feasibility enforcement procedure to the ABC Algorithm. Recently, \citet{kalayci2020efficient} reintroduced the Hybrid Algorithm, constructing an efficient metaheuristic combining Ant Colony Optimization, Genetic Algorithms, and ABC Optimization, yielding promising results. In addition to the growth of ABC-based methods, \citet{baykasoglu2015grasp} developed a Greedy Randomized Adaptive Search Procedure (GRASP), while \citet{ertenlice2018survey} explored swarm intelligence (SI) to address time complexity issues. Furthermore, \citet{akbay2020parallel} proposed a new method, the Parallel Variable Neighborhood Search Algorithm, which demonstrated high efficiency in solving MIQP problems.

Although these heuristic methods have gained relatively high results, most only focus on the original primal problem, and their relaxation models also only respect the primal Model. However, the Primal Model itself is not solvable as NP-Hard, and those heuristic methods also cannot guarantee the solutions. Here is where exact methods are introduced. That is, the cardinality constraint could be relaxed into a linear constraint, which transforms the MIQP into a solvable QP. Once the model is solved, the modified linear constraint could be discretized back into the original cardinality constraint. That would be a logical way to solve MIQP models. Considering all the constraints including the cardinality constraint, using Lagrange multipliers, all constraints can be embedded into the objective function, transforming it into a Dual Model. By pre-solving the cardinality constraint or deriving its optimal values ahead to further refine the process, allows MIQP to be solved without the cardinality constraint directly. The Lagrange approach helps derive the expressions for the integer variables. Previous work by \citet{fisher1981lagrangian, nemhauser1988duality} has demonstrated that this Dual Model effectively relaxes the problem, offering a pathway to approach the solution by solving the multiplier variables. Additionally, \citet{li2006optimal, shaw2008lagrangian} have provided substantial evidence supporting the efficiency of this method. Furthermore, recognizing that Lagrange multipliers are more suited for continuous variables, potential inefficiencies might happen when dealing with integer variables. To mitigate errors introduced by the multipliers for discrete variables, the employment of a more advanced relaxation could help the model approach the solution better. \citet{xu2024efficient} proposed a method using the diagonalization of the covariance matrix, establishing lower bounds for the diagonal matrix in correlation and ensuring feasible solutions for MIQP. For MILP, \citep{bragin2024survey} also gives a survey on the recent progress on MILP using Lagrange relaxations and the results of the recent achievement.

In this paper, we propose three types of relaxations to the primal problem: linear relaxation, duality relaxation, and augmented relaxation, resulting in the Line Model, Dual Model, and Augm Model, respectively. These exact relaxations are further enhanced by combining them with heuristic methods to form a new, more effective approach. To ensure comprehensive verification, we define and analyze the gaps between the primal problem and the others models. For the labs and pages limitation, we will only focus on two optimization programmings, portfolio optimization and network flow optimization for experiment and verification.

The remainder of this paper is organized as follows: In the rest subsections of Section \ref{intro}, we would detailedly discuss about how MIPs are used for portfolio optimization and network flow optimization. In Section \ref{formu}, we formally define and formulate our models and gaps for later justification, and in Section \ref{appro} we state the detailed approach to how we get our results using the models. Then in Section \ref{resul} we run our approach and examine the effectiveness of our method. For Section \ref{concl} we drop out the conclusion with our results.

\subsection{Portfolio Optimization Model}

With trillions of dollars circulating in global stock markets, investors are increasingly seeking portfolios that not only generate strong returns but also mitigate risk, making their investments safer. The challenge lies in identifying the optimal combination of assets that maximizes expected returns while minimizing risk, often measured by the variance of the portfolio. Striking the right balance between return and risk is crucial for developing effective portfolio strategies.

\citet{markowitz1952portfolio} introduced a model that examines the relationship between expected return and risk, measured by variance, based on the different weights assigned to securities or assets. This mean-variance (MV) model uses historical prices to estimate expected returns and asset correlations to calculate variance. By optimizing either the sum of expected returns or minimizing total variance, the optimal weight for each asset can be determined. Later, \citet{markowitz1956optimization} extended this work by providing an optimization approach for quadratic functions, specifically variance, subject to linear constraints and a fixed expected return. This formulation, a Quadratic Programming (QP) problem, also introduced the concept of ``efficient points,'' which form what is now known as the Efficient Frontier (EF).

While the Mean-Variance (MV) model is foundational, it lacks realism without incorporating practical constraints. For instance, \citet{graziasperanza1996heuristic} introduced minimum transaction units and bounds on asset weights into the model, and \citet{chang2000heuristics} emphasized the importance of restricting the number of selected assets, making the model more applicable to real-world scenarios. The inclusion of constraints, particularly cardinality constraints, transforms the MV model from a Quadratic Programming (QP) problem into a Mixed-Integer Quadratic Programming (MIQP) problem, which is NP-Hard \citep{pia2017mixedinteger}. Several studies, including \citet{guzelsoy2007duality}, \citet{guzelsoy2011integer}, and \citet{feizollahi2017exact}, have conducted extensive research on MIQP using Lagrange duality models. Building on this work, \citet{shaw2008lagrangian} explored how embedding the constraints into the original primal model through Lagrange multipliers forms a relaxed duality model or Dual Model. Although these relaxations introduce errors and gaps in the solutions, they make it feasible to find solutions within a practical time frame. \citet{shaw2008lagrangian} demonstrated a significant reduction in solution time using Gurobi, and further research by \citet{xu2024efficient} showed that the Dual Model can help establish a lower bound for the primal, facilitating solution approaches. In addition to Dual Models, various heuristic methods have been proposed to tackle more realistic models with constraints.

\begin{definition}
\label{MV}
    The typical MV Model could be formulated as:
    \begin{align}
        \min_{\mathbf{x}, \mathbf{b}} \ & \mathbf{x}^TQ\mathbf{x} \label{mv1} \\
        \mathrm{s.t.} \ & \sum{\mathbf{x}} = \mathbf{1}\mathbf{x} = 1 \label{mv2} \\
        & \mathbf{r}^T\mathbf{x} = r \label{mv3} \\
        & \sum{\mathbf{b}} = \mathbf{1}\mathbf{b} = b \label{mv4} \\
        & l\mathbf{b} \leq \mathbf{x} \leq u\mathbf{b} \label{mv5} \\
        & \mathbf{x} \in \mathbb{R}^n, \mathbf{b} \in \mathbb{B}^n. \label{mv6} 
    \end{align}
\end{definition}where (\ref{mv1}) defines the object, variance, or risk, the programming wants to minimize, and $Q$ is the covariance matrix which is positive-definite; (\ref{mv2}) ensures that the sum of the portion in each portfolio should be $1$; (\ref{mv3}) constraints the expected return of the portfolio; (\ref{mv4}) constraints the number of assets we are able to gain, that is how we will make the choice on assets first; (\ref{mv5}) gives the upper bound and the lower bound, and focus on the assets we only have; (\ref{mv6}) defines the property of assets as reals and choices as binaries.

\section{Problem Formulation}
\label{formu}

\def\x{\mathbf{x}}
\def\q{\mathbf{q}}
\def\R{\mathbb{R}}
\def\B{\mathbb{B}}
\def\c{\mathbf{c}}
\def\b{\mathbf{b}}

\subsection{Primal Model (Prim Model)}
\label{prim}

\begin{theorem}
    For all Primal Model, or original cardinality binary constrained optimization problem, can be defined as:
    \begin{align}
        \text{(Prim)}\min_{\x, \b} \ & \x^TQ\x + q\x \label{prim1} \\
        \mathrm{s.t.} \ & A\x = \c_a \label{prim2} \\
        & L\b \leq \x \leq U\b \label{prim3} \\
        & B\b = \c_b \label{prim4} \\
        & \x \in \R, \b \in \B. \label{prim5}
    \end{align}
    where (\ref{prim1}) reveals the basic LP or QP object to be minimized, and if it is a MILP, then $Q = \mathbf{0}$; (\ref{prim2}) reveals the basic LP or QP constraints; (\ref{prim3}) reveals the upper bound and lower bound for $\x$; (\ref{prim4}) reveals the cardinality constraints; and (\ref{prim5}) reveals the identities for $\x$ and $\b$.
\end{theorem}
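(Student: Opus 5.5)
The statement is essentially a modeling claim: every binary‑constrained LP/QP of the kind considered in Section~\ref{intro}, including the MV Model of Definition~\ref{MV}, can be written in the form (Prim). I will prove it by exhibiting, for a generic problem in the class, the data $Q, q, A, \c_a, L, U, B, \c_b$ that realize it, and checking that the two feasible sets and the two objectives coincide. The argument is constructive and runs constraint by constraint.

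\textbf{Step 1: fix the class.} I take the class to be problems with continuous variables $\x\in\R^n$ and binary selectors $\b\in\B^n$. The objective is $\x^TQ\x+q\x$, with $Q$ symmetric positive semidefinite; $Q=\mathbf{0}$ gives the MILP case. The continuous variables satisfy finitely many linear equalities. Each $x_i$ is switched on or off by $b_i$ together with bounds. Finally, there are linear equalities on $\b$, such as a cardinality count.

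\textbf{Step 2: normalize the constraints.}
\begin{itemize}
\item Stacking the linear equalities in $\x$ gives $A\x=\c_a$. In Definition~\ref{MV}, the rows are $\mathbf{1}$ and $\mathbf{r}^T$, and $\c_a=(1,r)^T$.
\item Linear inequalities $G\x\le h$ are turned into equalities by adding slack variables $s\ge 0$ to $\x$. Each slack gets a dummy binary fixed to $1$ through a row of $B$, and bounds $0\le s\le M b_s$ with $M$ a valid upper bound. Row‑constant objective terms extend by zero‑padding $Q$ and $q$.
\item The switching constraints $l_ib_i\le x_i\le u_ib_i$ become \eqref{prim3} with $L=\mathrm{diag}(l_i)$ and $U=\mathrm{diag}(u_i)$. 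Free continuous variables that are not switched are handled with a dummy binary fixed to $1$ and bounds $\pm M$.
\item Cardinality and other linear equalities on $\b$ are collected into $B\b=\c_b$. For Definition~\ref{MV}, this is $B=\mathbf{1}$ and $\c_b=b$.
\end{itemize}

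\textbf{Step 3: verify equivalence.} I check that the projection of the new feasible set onto the original variables equals the original feasible set, and that the objective is unchanged on it. Hence the optimal values and the optimal $(\x,\b)$ coincide.

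\textbf{Main obstacle.} The hardest step is Step 2 for unbounded continuous variables. The reformulation needs a finite big‑$M$, and that requires boundedness of the feasible region, or at least of the optimal solutions. I will state this as an assumption, which Definition~\ref{MV} satisfies since $0\le x\le 1$. Without it, the claim "for all" has to be restricted to bounded instances.
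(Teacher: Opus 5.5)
The paper gives no proof of this statement. Despite the theorem label, it functions as the \emph{definition} of the (Prim) model. The only support the paper offers is the next theorem, which lists the data $A=[\mathbf{r}^T;\mathbf{1}]$, $\mathbf{c}_a=(r,1)^T$, $L=lI$, $U=uI$, $B=\mathbf{1}$, $\mathbf{c}_b=[b]$ realizing the MV model. That is exactly your Step 2 specialization, up to the order of the rows of $A$.

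So your argument is not the paper's route but a genuine addition. For the class you fix in Step 1 the claim is tautological, and the real content is your Step 2 reduction. There, linear inequalities are handled by slacks, and unswitched or free variables by dummy binaries fixed to $1$ with big-$M$ bounds. This is correct provided the cardinality row of $B$ puts zero weight on the dummy binaries.

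What each approach buys:
\begin{itemize}
\item The paper's direct instantiation keeps $Q$ equal to the positive-definite covariance matrix. The later Dual-model derivation needs this, since it uses $\hat{\mathbf{x}}=-\tfrac12 Q^{-1}(\cdots)$.
\item Your general reduction covers a much wider class with explicit hypotheses. However, the zero-padding of $Q$ on slack and dummy coordinates makes $Q$ singular, so the reformulated instances fall outside what the paper's subsequent relaxations assume.
\end{itemize}

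Two refinements to your own write-up.

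First, the boundedness caveat is not an artifact of your big-$M$ construction; it is intrinsic to (Prim). Since $\mathbf{b}$ ranges over a finite set $\mathbb{B}^m$, the constraint $L\mathbf{b}\le\mathbf{x}\le U\mathbf{b}$ forces $\min_{\mathbf{b}}(L\mathbf{b})_j\le x_j\le\max_{\mathbf{b}}(U\mathbf{b})_j$ for every component. Hence every feasible set of the form (Prim) is bounded in $\mathbf{x}$, and so is any linear image of it. The literal ``for all'' is therefore false for instances with an unbounded feasible region, and you should state the restriction as necessary, not merely as what your proof needs.

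Second, consider your weaker alternative assumption, where only some optimal solution lies within a known bound $M$. Then the big-$M$ box cuts off feasible points, so Step 3's claim that the projection of the new feasible set equals the old one fails. What survives is equality of optimal values, plus the fact that every optimum of the reformulation is optimal for the original problem. Keep Step 3 as stated for bounded feasible regions, and state this weaker conclusion otherwise. In that case a valid $M$ has to come from an a priori bound, e.g.\ on the encoding size of an optimal solution.
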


\begin{theorem}
    The typical MV Model from (\ref{MV}) could be generalized to the Prim Model with: \begin{align*}
        A & = \left[\begin{array}{l}
            \mathbf{r}^T \\
            \hline
            \mathbf{1}
        \end{array}\right]
        & \mathbf{c}_a & = \left[\begin{array}{l}
            r \\
            \hline
            1
        \end{array}\right] \\
        L & = lI & U & = uI \\
        B & = \mathbf{1} & \mathbf{c}_b & = [b].
    \end{align*}
\end{theorem}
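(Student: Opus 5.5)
The plan is to substitute the stated matrices and vectors into the Prim Model (\ref{prim1})--(\ref{prim5}) and check, line by line, that every constraint reduces to the corresponding constraint of the MV Model in Definition \ref{MV}, and conversely. The argument is purely a matter of block-matrix bookkeeping, so the plan is organized by constraint.

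\emph{Objective.} First I will set $q = \mathbf{0}$ (the zero row vector). The MV objective has no linear term, and the statement leaves $q$ unspecified. With this choice, (\ref{prim1}) becomes $\x^TQ\x$, which is (\ref{mv1}) with the same positive-definite covariance matrix $Q$.

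\emph{Equality constraints.} I expand $A\x = \c_a$ blockwise. The stacked matrix $A$ has first row $\mathbf{r}^T$ and second row $\mathbf{1}$, so the system reads $\mathbf{r}^T\x = r$ and $\mathbf{1}\x = 1$. These are exactly (\ref{mv3}) and (\ref{mv2}); only the order is swapped, and reordering equalities does not change the feasible set.

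\emph{Bounds and cardinality.} Next, $L\b \le \x \le U\b$ with $L = lI$ and $U = uI$ gives $l\b \le \x \le u\b$ componentwise, which is (\ref{mv5}). Likewise $B\b = \c_b$ with $B = \mathbf{1}$ and $\c_b = [b]$ gives $\mathbf{1}\b = b$, which is (\ref{mv4}). The domain constraint (\ref{prim5}) matches (\ref{mv6}) once $\x\in\R$ and $\b\in\B$ are read componentwise as $\R^n$ and $\B^n$.

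\emph{Conclusion.} The two problems then have identical feasible sets in $(\x,\b)$ and identical objectives, so they coincide as optimization problems; in particular their optimal values and optimal solutions are the same.

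The only real obstacle is notational. I need to fix the convention that $q=\mathbf{0}$, fix the dimensions (with $A$ of size $2\times n$ and $B$ of size $1\times n$), and confirm that the scalar $b$ in $\c_b$ is the cardinality bound rather than the vector $\b$. Once those are stated explicitly, every step is immediate.
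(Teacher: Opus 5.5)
Your proposal is correct. The paper states this result without proof, treating it as a direct substitution, and your constraint-by-constraint check is exactly that verification: the objective, the two stacked rows of $A\mathbf{x}=\mathbf{c}_a$, the bounds with $L=lI$ and $U=uI$, and the cardinality row $B=\mathbf{1}$. Your explicit choice $q=\mathbf{0}$ is a small but genuine addition, since the statement leaves $q$ unspecified and the two objectives coincide only with that choice.
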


\iffalse
\begin{proof}
    Let \begin{align*}
        \min_{\x \in \R^n \times \B^n} \ & \x^TQ\x + q\x \\
        \mathrm{s.t.} \ & D\x \leq \c_d \\
    \end{align*} be a binary MIP. Then
\end{proof}
\fi

\subsection{Linear Model (Line Model)}
\label{line}

\begin{definition}
    We define the linear model (Line Model in short) as:
    \begin{align}
        \textit{(Line)}\min_{\x, \b_\R} \ & \x^TQ\x + q\x \label{line1} \\
        \mathrm{s.t.} \ & A\x = \c_a \label{line2} \\
        & L\b_\R \leq \x \leq U\b_\R \label{line3} \\
        & B\b_\R = \c_b \label{line4} \\
        & \x, \b_\R \in \R \label{line5}\\
        & \b = \mathbf{1}_{(b_\R)_i \in \mathrm{top}\{\b_\R|B\mathbf{1}_{b_\R} = \c_b\}}. \label{line6}
    \end{align}
    where (\ref{line1}) reveals the object to be minimized; (\ref{line2}) to (\ref{line5}) are similar to (\ref{prim}) but $\b_\R \in \R$; and (\ref{line6}) discretize $\b_\R \in \R$ to $\b \in \B$ by discretize the top multiple $(b_\R)_i$ to $1$ and the rest to $0$ under keeping $B\b = \c_b$.
\end{definition}

\def\l{\mathbf{\lambda}^T}
\subsection{Duality Model (Dual Model)}
\label{dual}

\begin{definition}
    Since the main challanging is about the binary variable $\b$, then we introducing lagrange multiplers to resolve the constraints of $\b$ to define the initial duality model as:
    \begin{align*}
        \max_{\l}\min_{\x, \b} \ & \x^TQ\x + q\x + \l_a(A\x - \c_a) + \l_b(B\b - \c_b) \\ & + \l_l(L\b - \x) + \l_u(\x - U\b)\\
        & = \x^TQ\x + (q + \l_aA - \l_l + \l_u)\x \\ & + (\l_bB + \l_lL - \l_uU) \b - \l_ac_a - \l_bc_b \\
        \mathrm{s.t.} \ & \l_a, \l_b, \l_l, \l_u \in \R, \l_l, \l_u \geq \mathbf{0}.
    \end{align*}
\end{definition}

\begin{theorem}
    If $Q$ is positive-definite, i.e. the MIP is MIQP, the final duality model could be derived as: \begin{align}
            \text{(Dual)}\max_{\l} \ & -\hat{\x}^TQ\hat{\x} + (\l_bB + \l_lL - \l_uU)\mathbf{1} \\ & - \l_ac_a - \l_bc_b \\
            \mathrm{s.t.} \ & \hat{\x} = -\frac{1}{2}Q^{-1}(q + \l_aA - \l_l + \l_u) \\
            & (\l_bB + \l_lL - \l_uU) \leq \mathbf{0} \\
            & \b = \mathbf{1}_{(b_\R)_i \in \mathrm{top}\{-(\l_bB + \l_lL - \l_uU)_i\}} \label{dualb}\\
            & \l_a, \l_b, \l_l, \l_u \in \R, \l_l, \l_u \geq \mathbf{0}.
    \end{align}
\end{theorem}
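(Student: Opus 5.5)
The plan is to start from the initial duality model in the preceding Definition and evaluate the inner minimization $\min_{\x,\b}$ for a fixed multiplier vector $\l=(\l_a,\l_b,\l_l,\l_u)$ with $\l_l,\l_u\ge\mathbf{0}$. The rearranged Lagrangian is separable: it is
\[
\x^TQ\x + (q + \l_aA - \l_l + \l_u)\x
\]
in $\x$, plus the linear term $(\l_bB + \l_lL - \l_uU)\b$ in $\b$, plus the constant $-\l_ac_a-\l_bc_b$. I will therefore minimize over $\x$ and over $\b$ separately and add the two results. The outer maximization over $\l$ then gives the stated (Dual) program.

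For the $\x$-part, I use that $Q$ is positive-definite. Then the quadratic is strictly convex and coercive, so its unique minimizer is the stationary point. Writing $g=(q + \l_aA - \l_l + \l_u)^T$, the condition $2Q\x+g=0$ gives
\[
\hat{\x}=-\tfrac12 Q^{-1}g .
\]
Substituting back, $\hat{\x}^TQ\hat{\x}+g^T\hat{\x}=-\hat{\x}^TQ\hat{\x}$, because $g^T\hat{\x}=-2\hat{\x}^TQ\hat{\x}$. This produces both the first term of the objective and the first constraint.

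For the $\b$-part, let $d=(\l_bB + \l_lL - \l_uU)$ denote the row vector of coefficients. Minimizing $d\b$ over $\b\in\B^n$ is coordinatewise: set $b_i=1$ exactly when $d_i<0$, which gives the value $\sum_i\min(d_i,0)$.

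The constraint $d\le\mathbf{0}$ is what reduces this to the stated form. If we restrict the outer maximization to multipliers with $d\le\mathbf 0$, every coordinate prefers $b_i=1$, so the inner minimum equals $d\mathbf{1}$. This yields the term $(\l_bB + \l_lL - \l_uU)\mathbf{1}$. Collecting the three pieces gives exactly the displayed objective.

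The remaining ingredient is the rule (\ref{dualb}) for reading off $\b$. The cardinality constraint $B\b=\c_b$ was dualized, so the dual minimizer $\b=\mathbf 1$ is not primal-feasible. I will justify (\ref{dualb}) as a primal recovery step rather than as part of the dual value. Among binary vectors satisfying $B\b=\c_b$, the ones minimizing $d\b$ are obtained by switching on the coordinates with the largest $-d_i$. In the MV case, where $B=\mathbf 1$, this is just the top-$b$ selection. This mirrors the discretization (\ref{line6}) used for the Line Model.

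The main obstacle is the sign restriction $d\le\mathbf 0$, since it is not implied by the Lagrangian itself. Imposing it only shrinks the feasible set of the outer max, so what it actually gives is a lower bound on the full dual, which by weak duality is still a lower bound on (Prim). I will try to argue that nothing is lost on the relevant region, or else state the theorem as defining this restricted dual rather than claiming equality with the unrestricted one. Weak duality then guarantees that (Dual) lower-bounds (Prim) either way.
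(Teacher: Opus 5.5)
Your proposal follows the paper's proof essentially step for step: the same separation of the Lagrangian, the stationary point $\hat{\mathbf{x}}$ with value $-\hat{\mathbf{x}}^TQ\hat{\mathbf{x}}$, and the coordinatewise sign analysis of the $\mathbf{b}$-term with $d\le\mathbf{0}$ imposed to obtain $d\mathbf{1}$. You are more careful than the paper, which asserts both the sign restriction and the top-selection rule (\ref{dualb}) without comment, though your justification of (\ref{dualb}) as the minimizer of $d\mathbf{b}$ subject to $B\mathbf{b}=\mathbf{c}_b$ holds only for a single cardinality row $B=\mathbf{1}$.

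The obstacle you flag actually closes in the MV setting $L=lI$, $U=uI$ with $u>l$. Raising $(\lambda_l)_i$ and $(\lambda_u)_i$ by the same $t_i=\max(d_i,0)/(u-l)\ge 0$ leaves $-\lambda_l+\lambda_u$, and hence $\hat{\mathbf{x}}$, unchanged, and it replaces $d_i$ by $\min(d_i,0)$. So every multiplier can be moved into $\{d\le\mathbf{0}\}$ without changing the dual function, and the restriction loses nothing.
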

where (\ref{dualb}) means that we adopt the top $-(\l_bB + \l_lL - \l_uU)_i$ with index $i$ and set $b_i = 1$, and the rest set to $0$. A detailed proof is also shown below.

\begin{proof}
    For \begin{align*}
        \min_{\x, \b} \ & \x^TQ\x + (q + \l_aA - \l_l + \l_u)\x \\ & + (\l_bB + \l_lL - \l_uU) \b + (-\l_ac_a - \l_bc_b),
    \end{align*}
    we analyze seperately for $\x$ and $\b$:
    \begin{itemize}
        \item $\min_{\x, (\b)} \ \x^TQ\x + (q + \l_aA - \l_l + \l_u)\x$: If $Q$ is positive-definite, then let the gradient equals to $\mathbf{0}$, we have \begin{align*}
                2Q\hat{\x} + (q + \l_aA - \l_l + \l_u) = 0 \\ \implies \hat{\x} = -\frac{1}{2}Q^{-1}(q + \l_aA - \l_l + \l_u).
            \end{align*} Therefore, \begin{align*}
                \min_{\x, \b} \ & \x^TQ\x + (q + \l_aA - \l_l + \l_u)\x \\
                = \ & -\hat{\x}^TQ\hat{\x}
            \end{align*}
        \item $\min_{(\x), \b} \ (\l_bB + \l_lL - \l_uU) \b$: \begin{itemize}
            \item If $(\l_bB + \l_lL - \l_uU)_i \geq 0$, $b_i = 0$.
            \item If $(\l_bB + \l_lL - \l_uU)_i < 0$, $b_i = 1$.
            \end{itemize}
            Therefore, we could turn it to: \begin{align*}
                & (\l_bB + \l_lL - \l_uU)\mathbf{1} \\
                \mathrm{s.t.} \ & (\l_bB + \l_lL - \l_uU) \leq \mathbf{0}.
            \end{align*}
    \end{itemize}
    To sum up, the proof is complete.
\end{proof}

\begin{theorem}
    The upper bound for the Dual Model is simply the Prim Model, and we can derive the lower bound for by constructing a new matrix:
    \[\Phi_{i, j} = \begin{cases}
        \frac{1}{\sum_{k = 1}^{n}|Q^{-1}_{j, k}|} & i = j \\
        0 & i \neq j
    \end{cases}\]
    Then, the new model \begin{align*}
        \max_{\l}\min_{\x, \b} \ & \x^T\Phi\x + (q + \l_aA - \l_l + \l_u)\x \\ & + (\l_bB + \l_lL - \l_uU) \b + (-\l_ac_a - \l_bc_b)
    \end{align*} is the lower bound of the original Dual Model, i.e. \[\mathbf{0} < \Phi \leq Q.\]
\end{theorem}

Proof has been done by \citet{xu2024efficient}.

\iffalse
\begin{proof}
    Since $Q^{-1}$ is positive-definite because $Q$ is a positive-definite matrix, then by definition, \[\forall i \leq n(\sum_{j = 1}^{n}|Q^{-1}_{i, j}| > 0) \implies \forall j \leq n (\frac{1}{\sum_{k = 1}^{n}|Q^{-1}_{j, k}|} > 0).\] So $\Phi$ is also positive-definite. Therefore, we have \begin{align*}
        (Q^{-1}\Phi)_{i, j} & = \frac{Q_{i, j}^{-1}}{\sum_{k = 1}^{n}|Q^{-1}_{j, k}|} + 0 \\
        (Q^{-1}\Phi)_{i, j} & = \frac{Q_{i, j}^{-1}}{\sum_{k = 1}^{n}|Q^{-1}_{k, j}|} \quad \text{because of symmetry} \\
        & = 1 - \frac{\sum_{k = 1, k \neq j}^{n}|Q^{-1}_{i, k}|}{\sum_{k = 1}^{n}|Q^{-1}_{i, k}|} \\
        & \leq 1 \\
        \implies & Q^{-1}\Phi \leq I \\
        \implies & \Phi \leq Q
    \end{align*}
\end{proof}
\fi
\subsection{Augmented Model (Augm Model)}

\begin{definition}
    We define the initial augmented model by introducing augmented term upon our initial duality model as:
    \begin{align*}
        \max_{\l}\min_{\x, \b} \ & = \x^TQ\x + (q + \l_aA - \l_l + \l_u)\x \\ & + (\l_bB + \l_lL - \l_uU) \b - \l_ac_a - \l_bc_b \\
        & + \lambda_g |A\x - c_a|^2\\
        \mathrm{s.t.} \ & \l_a, \l_b, \l_l, \l_u \in \R, \l_l, \l_u \geq \mathbf{0}.
    \end{align*}
\end{definition}

\begin{theorem}
    Since it is similar to the Dual Model only with one more $\lambda_g |A\x - c_a|^2$ terms, thus we would like to make a further relaxation on this model. Deriving from Theorem 2.6, we use the diagonalized matrix $Phi$ to generate the final model as:
    \begin{align}
        \text{(Augm)}\max_{\l} \ & -\hat{\x}^T\Phi\hat{\x} + (\l_bB + \l_lL - \l_uU)\mathbf{1} \\ & - \l_ac_a - \l_bc_b + \lambda_g |A\x - c_a|^2 \\
        \mathrm{s.t.} \ & \Phi_{i, j} = \begin{cases}
            \frac{1}{\sum_{k = 1}^{n}|Q^{-1}_{j, k}|} & i = j \\
            0 & i \neq j
        \end{cases} \\
        & \hat{\x} = -\frac{1}{2}Q^{-1}(q + \l_aA - \l_l + \l_u) \\
        & (\l_bB + \l_lL - \l_uU) \leq \mathbf{0} \\
        & \l_a, \l_b, \l_l, \l_u \in \R, \l_l, \l_u \geq \mathbf{0}.
    \end{align}
\end{theorem}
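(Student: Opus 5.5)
I will derive (Augm) from the initial augmented model in the same way Theorem 2.5 derived (Dual) from the initial duality model, with one extra relaxation step justified by Theorem 2.6. I first fix the multipliers $\l_a,\l_b,\l_l,\l_u$ (with $\l_l,\l_u\geq\mathbf{0}$) and $\lambda_g\geq 0$. The inner objective is then separable: one part depends only on $\x$ and the other only on $\b$. I will therefore minimise over each block separately and recombine, exactly as in the proof of Theorem 2.5.

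\emph{The $\b$-block.} The terms involving $\b$ are identical to those in the Dual Model, namely $(\l_bB+\l_lL-\l_uU)\b$. I reuse that argument verbatim: coordinatewise, $b_i=1$ exactly when the $i$-th coefficient is negative. This gives the value $(\l_bB+\l_lL-\l_uU)\mathbf{1}$ under the constraint $(\l_bB+\l_lL-\l_uU)\leq\mathbf{0}$. The top-selection rule (\ref{dualb}) then recovers a binary $\b$ satisfying the cardinality requirement.

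\emph{The $\x$-block.} This is $\x^TQ\x+(q+\l_aA-\l_l+\l_u)\x+\lambda_g|A\x-c_a|^2$. Here I invoke Theorem 2.6: replacing $Q$ by the diagonal $\Phi$ with $\mathbf{0}<\Phi\leq Q$ can only decrease the inner minimum for every fixed multiplier. Hence the resulting max--min is a valid lower-bounding relaxation of the initial augmented model. For the $\x$-part without the penalty, first-order conditions give $-\hat\x^T\Phi\hat\x$ at the stationary point.

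The main obstacle is the penalty term. Taken exactly, the stationary point solves $(2Q+2\lambda_gA^TA)\x=-(q+\l_aA-\l_l+\l_u)^T+2\lambda_gA^Tc_a$, which does not match the stated $\hat\x$. My plan is to treat the augmented term as evaluated at a point rather than jointly minimised, consistent with the statement keeping $\lambda_g|A\x-c_a|^2$ outside the closed form. Concretely, I would freeze $\x$ in the penalty at the Lagrangian minimiser $\hat\x=-\frac12Q^{-1}(q+\l_aA-\l_l+\l_u)$ inherited from Theorem 2.5, and add $\lambda_g|A\hat\x-c_a|^2$ as a correction that vanishes whenever $\hat\x$ is primal feasible. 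I would then observe that on the feasible set $A\x=c_a$ the penalty is zero, so the augmented and dual objectives agree there. It follows that the Augm value is still bounded above by the Prim Model. The lower-bound property comes from Theorem 2.6 applied to the quadratic term. Combining the two blocks and the constant $-\l_ac_a-\l_bc_b$ yields the stated (Augm) program; I expect the justification of evaluating the penalty at $\hat\x$ (rather than at the true minimiser) to be the step needing the most care, and I would state it explicitly as a modelling relaxation.
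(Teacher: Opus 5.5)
The paper gives no derivation for this statement. (Augm) is simply posited by swapping $\Phi$ into the Dual objective and appending the penalty. Your proposal goes further and claims (Augm) is a bound-preserving relaxation; both bound claims fail.

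\textbf{The $\Phi$ step.} Let $\mathbf{v}$ be the (column) coefficient vector of $\mathbf{x}$ in the Lagrangian. Replacing $Q$ by $\Phi$ in the objective gives the inner minimum $-\tfrac14\mathbf{v}^T\Phi^{-1}\mathbf{v}$, attained at $-\tfrac12\Phi^{-1}\mathbf{v}$. Your sentence that first-order conditions give $-\hat{\mathbf{x}}^T\Phi\hat{\mathbf{x}}$ holds only at that point, yet you then fix $\hat{\mathbf{x}}=-\tfrac12Q^{-1}\mathbf{v}$ as in the statement. With that $\hat{\mathbf{x}}$, the term is neither the $Q$- nor the $\Phi$-inner minimum. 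Moreover, $\Phi\preceq Q$ implies $Q^{-1}\Phi Q^{-1}\preceq Q^{-1}$, so
\[
-\hat{\mathbf{x}}^T\Phi\hat{\mathbf{x}}=-\tfrac14\mathbf{v}^TQ^{-1}\Phi Q^{-1}\mathbf{v}\;\geq\;-\tfrac14\mathbf{v}^TQ^{-1}\mathbf{v}=-\hat{\mathbf{x}}^TQ\hat{\mathbf{x}}.
\]
The stated quadratic term therefore overestimates even the Dual's inner minimum. Theorem 2.6 pushes in the opposite direction from the one you need, so ``the lower-bound property comes from Theorem 2.6'' is false for the model as written.

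\textbf{The penalty.} Evaluating $\lambda_g|A\hat{\mathbf{x}}-c_a|^2$ at a fixed point gives the augmented Lagrangian's value there, which is at least its minimum over $\mathbf{x}$. The penalty vanishing on $A\mathbf{x}=c_a$ yields weak duality only for $\min_{\mathbf{x},\mathbf{b}}$ of the augmented Lagrangian. It says nothing about the value at $\hat{\mathbf{x}}$, which is generally infeasible. So ``the Augm value is still bounded above by the Prim Model'' does not follow, and in fact it is false. Set $\boldsymbol{\lambda}_b,\boldsymbol{\lambda}_l,\boldsymbol{\lambda}_u$ to zero, so the sign constraint holds trivially. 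The objective is then a quadratic in the free multiplier $\boldsymbol{\lambda}_a$ with quadratic form $\tfrac14\boldsymbol{\lambda}_a^T\bigl(\lambda_gM^2-AQ^{-1}\Phi Q^{-1}A^T\bigr)\boldsymbol{\lambda}_a$, where $M=AQ^{-1}A^T$. For any $A\neq 0$ this form has a positive direction once $\lambda_g$ is large enough, and the maximum is then $+\infty$.

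The defensible reading, and effectively the paper's, is that (Augm) is a heuristic device for producing candidate $\mathbf{b}$ by top-selection for the solution pool, with no bound attached. If you want an actual theorem, you must change the model: keep both $\Phi$ and the penalty inside the inner minimisation, so the minimiser solves $(2\Phi+2\lambda_gA^TA)\mathbf{x}=-\mathbf{v}+2\lambda_gA^Tc_a$. Then $\Phi\preceq Q$ together with augmented weak duality does give a lower bound on Prim, but that is not the program in the statement.
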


\section{Proposed Solution Approach}
\label{appro}

\begin{figure*}[h!]
    \begin{center}
    %\framebox[4.0in]{$\;$}
    %\fbox{\rule[-.5cm]{0cm}{4cm} \rule[-.5cm]{4cm}{0cm}}
    \includegraphics[width=\linewidth]{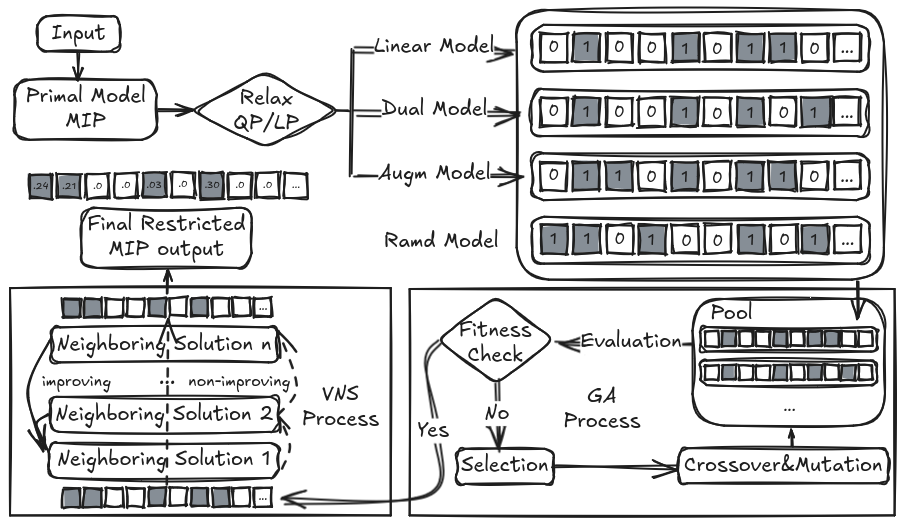}
    \end{center}
    \caption{A schematic visualizing our approach. Supported by Open-Source Excalidraw.}
    \label{fw}
\end{figure*}

To solve the problem, we employ a multistep approach integrating several methods as shown in Figure \ref{fw}. First, the Primal Model is relaxed into QP models. Starting with random initial solutions, with solutions generated by the QP models, a solution pool is created. Second, a Genetic Algorithm (GA) is applied to this pool, where a series of fitness evaluations are conducted, and the best GA solution is selected for further refinement. Third, the solution is improved using a neighborhood search algorithm, which enhances its accuracy. Finally, the refined solution from the neighborhood search is used to restrict the original MIQP to a QP formulation, yielding the optimal asset weights. The following sections provide a detailed explanation of each step.

\subsection{Initial Solution Pool}

For the input parameters, we require the expected returns for each asset and the covariance matrix between assets. Additionally, we must specify the target total return, the number of assets to select, and the upper and lower bounds for asset weights.

Given that Gurobi cannot efficiently handle large-scale MIQP problems, we first relax the Primal MIQP Model into three models: the Line Model, Dual Model, and Augm Model. Using the predefined parameters, we can generate initial solutions from these relaxed models. Simultaneously, we generate $M^\mathrm{Random}$ random initial solutions following these rules: the solution vector size should be $n$, each element in the vector must be either $0$ or $1$, exactly $k$ values in the vector should be $1$, and each value has an equal probability of being $1$. This gives us a total of $M = M^\mathrm{Relax} + M^\mathrm{Random}$ solutions, which are then assigned to the pool for further processing.

\subsection{Genetic Algorithm}

After generating solutions within the pool, we evaluate their competitiveness using a fitness function. This fitness function is defined as follows: given a solution vector $\b$, we introduce a new constraint into the Primal Model, requiring that the selected variables in the Primal Model match the values in $\b$. This transforms the problem into a QP. By solving this QP, we obtain the variance $v$, which is used as the measure of competitiveness. The fitness function is therefore $f(\b)=v$, where a lower value of $f(\b)$ indicates a more competitive solution.

Using this fitness function, we conduct a competition among the solutions in the pool. We retain the top $p\%$ of the solutions based on fitness and discard the rest. Next, we perform a fitness check, where the difference between the worst and best solutions must not be too large. Specifically, we require that \[l \geq \frac{\max\{f(\b) | \b \in \mathrm{pool}\} - \min\{f(\b) | \b \in \mathrm{pool}\}}{\min\{f(\b) | \b \in \mathrm{pool}\}}\] for solutions within the pool. If this condition is met, the best solution is selected for the next step. Otherwise, two-parent solutions are randomly selected, and we apply a crossover operation to generate a new solution.

The crossover is performed under the following rules: the new solution vector size is $n$, each element in the vector must be either $0$ or $1$, and exactly $k$ elements should be $1$. If both parents have $b_i^\mathrm{father} = b_i^\mathrm{mother} = 1$, then $b_i^\mathrm{son} = 1$. For positions where $b_i^\mathrm{father} \neq b_i^\mathrm{mother}$, $b_i^\mathrm{son}$ is assigned a value of $1$ with equal probability. The new solution is also subjected to a mutation with a probability of $m\%$, where one index with $b_i = 1$ and another with $b_i = 0$ are swapped.

Once the new solution is generated, it, along with its parent solutions, is reintroduced into the pool, and the competition is repeated. This process continues iteratively until the fitness check condition is satisfied.

\subsection{Neighborhood Search Algorithm}

With the best solution generated by the Genetic Algorithm (GA) approach, which is either very close to or equal to the optimal primal solution, we can apply neighborhood searching to refine the solution further. In this approach, the GA solution is initially treated as the ``neighborhood solution'' or neighboring solution 1.

To generate neighboring solution 2, we randomly select two values in the neighborhood solution: one with a value of $1$ and another with a value of $0$. We then swap their positions. After that, we compare the fitness of neighboring solution 2 with that of the initial neighborhood solution. If the fitness of neighboring solution 2 is better (i.e., lower), it becomes the new neighborhood solution, and we repeat the process.

If neighboring solution 2 is not better, we proceed by generating neighboring solution 3 in the same manner—swapping the values of $1$ and $0$ based on neighboring solution 2 and comparing it with the neighborhood solution. This iterative process continues until either an improvement is found or a predefined limit is reached.

\section{Experiment Results}
\label{resul}

\subsection{Dataset Description}

Our approach and later gap analysis are based on weekly price data from March 1992 to September 1997 of the Hang Seng (Hong Kong), DAX 100 (Germany), FTSE 100 (UK), S\&P 100 (USA) and Nikkei 225 (Japan) \citep{chang2000heuristics}. The size of each dataset is ranged from $n = 31$ for Hang Seng to $n = 225$ for Nikkei. We set the global upper bound for each asset as $u_i = 1$ and the lower bound as $l_i = 0.01$, the total number of assets we would choose is $k = 10$, and $\lambda_g$ in Augm Model we adopt $1e-7$. More detailed information is available in the code will be available after acceptance.

\subsection{Environment Description}

The experiment is delivered under 96 Intel(R) Xeon(R) CPU @ 2.00GHz. The Python version is 3.12.8 and the Gurobi version is 12.0.0 build v12.0.0rc1 (linux64) with an Academic license.

\subsection{Efficient Frontiers and Percentage Errors}
\iffalse
\begin{figure}[h!]
    \begin{center}
    %\framebox[4.0in]{$\;$}
    %\fbox{\rule[-.5cm]{0cm}{4cm} \rule[-.5cm]{4cm}{0cm}}
    \includegraphics[width=\linewidth]{ef-3.png}
    \end{center}
    \caption{A sample (port 3) of EF and solutions generated by different approaches. Supported by Matplotlib.}
\end{figure}
\fi
Based on 5 datasets above, the given unconstrained efficient frontier (UEF) as an upper bound is drawn as a line in each figure. For comparing and visualizing the effect of Line, Dual, Augm, and Ours Models, we sample 50 points respecting returns from the domain of the UEF and then mark the points on each figure. In all the figures, the risks have a positive correlation with returns, which means higher returns come with higher risks, and our approach based on Line, Dual, and Augm Models should remain left which is the closest to the upper bound line. The features shown in the figures logically make sense.

\begin{table}[h!]
    \caption{Percentage Errors of different approaches}
    \label{table-PE}
    \vskip 0.15in
    \begin{center}
    \begin{tabular}{llllll}
    \toprule
    PE && Line & Dual & Augm & Ours \\
    \midrule
    1 
    & Mean   & 0.7929 & 0.5232 & 0.9131 & \bf 0.5231 \\
    & Median & 0.4317 & 0.4147 & 0.6223 & \bf 0.3356 \\
    \\
    2 
    & Mean   & 4.8232 & 5.8435 & 10.3267 & \bf 0.6050 \\
    & Median & 4.3321 & 2.9218 & 6.0821 & \bf 0.1502 \\
    \\
    3 
    & Mean   & 4.7993 & 4.0280 & 6.2230 & \bf 0.1598 \\
    & Median & 5.3894 & 2.8866 & 9.7862 & \bf 0.3217 \\
    \\
    4 
    & Mean   & 5.5428 & 5.8134 & 8.9946 & \bf 0.2272 \\
    & Median & 4.2753 & 5.3949 & 11.0371 & \bf 0.9978 \\
    \\
    5
    & Mean   & 1.3079 & 1.7143 & 2.0529 & \bf 0.1368 \\
    & Median & 0.4247 & 0.3827 & 1.7317 & \bf 0.1363 \\
    \bottomrule
    \end{tabular}
    \end{center}
\end{table}

Then Table \ref{table-PE} reflects the percentage error from point to the EF line in the figures. In the table, Line, Dual, and Augm models as relaxation models cannot always have a good effect on solution generation. However, our approach based on these exact solutions and with heuristics gains a good effect. Compared with Gurobi optimal solutions and other heuristics \citep{woodside-oriakhi2011heuristic}, our approach gains a close but mostly a better effect. To be more specific, we mostly reach a mean percentage error, implying that our method has fewer outliners and thus has more stable results.

It might be noticed that although we apply GA in our methods, we cannot perform a better result than GA. It is because of the limitations of our current devices. To improve the speed and reveal more obvious solutions in the figures above, we make some simplifications with GA and design a different structure compared with the state-of-the-art GA. It is the same as our neighborhood searching algorithm. Better results might be approached after applying more recent heuristics with higher computational devices.

\subsection{Models Gap Formulation}

\subsubsection{Binary Gap}

\def\g{\mathbf{g}}

Since we always compare and analyze our gaps between one of the relaxed models and the Gurobi optimal Model, we define the gaps for them as $\g^\mathrm{Relaxed} = \b^\mathrm{Relaxed} - \b^\mathrm{Gurobi}$, where $\b^\mathrm{Relaxed}$ stands for the choice binary variables' solution for one of the relaxed models, and $\b^\mathrm{Gurobi}$ stands for Gurobi optimal's. Since $\mathbf{d}^\mathrm{Relaxed}$ is a differences vector between two binary vectors, we define variable $g^\mathrm{Relaxed} = \frac{1}{2}|\g^\mathrm{Relaxed}| = \frac{1}{2}\sum_{i = 1}^{n} |b^\mathrm{Relaxed}_i - b^\mathrm{Gurobi}_i|$ to analyze the number of all differences between two solutions. To be more specific:
\begin{equation}
    \begin{aligned}
        \g^\mathrm{Line} & = \b^\mathrm{Line} - \b^\mathrm{Gurobi} \\
        \g^\mathrm{Dual} & = \b^\mathrm{Dual} - \b^\mathrm{Gurobi} \\
        \g^\mathrm{Augm} & = \b^\mathrm{Augm} - \b^\mathrm{Gurobi} \\
        \g^\mathrm{Ours} & = \b^\mathrm{Augm} - \b^\mathrm{Gurobi}
    \end{aligned}
\end{equation}
In this way, we are able to quantify and analyze the binary gaps. Our results are shown in Table \ref{table-bgaps}.

% others, bold best

\begin{table}[h!]
    \caption{Binary Gaps of different approaches}
    \label{table-bgaps}
    \vskip 0.15in
    \begin{center}
    \begin{tabular}{llllll}
    \toprule
    $\g$ && Line & Dual & Augm & Ours \\
    \midrule
    1 
    & Mean   & 0.5531 & 0.5957 & 1.7447 & \bf 0.0213 \\
    & Median & 0.0000 & 0.0000 & 2.0000 & \bf 0.0000 \\
    & Max    & \bf 2.0000 & 4.0000 & 8.0000 & \bf 2.0000 \\
    & Min    & \bf 0.0000 & \bf 0.0000 & \bf 0.0000 & \bf 0.0000 \\
    \\
    2
    & Mean   & 3.0833 & 3.3750 & 7.4166 & \bf 2.4583 \\
    & Median & \bf 2.0000 & \bf 2.0000 & 8.0000 & \bf 2.0000 \\
    & Max    & \bf 10.000 & 14.000 & \bf 10.000 & 14.000 \\
    & Min    & \bf 0.0000 & \bf 0.0000 & 2.0000 & \bf 0.0000 \\
    \\
    3
    & Mean   & 3.0416 & 4.5833 & 6.6666 & \bf 2.8333 \\
    & Median & \bf 2.0000 & 4.0000 & 6.0000 & \bf 2.0000 \\
    & Max    & \bf 10.000 & 14.000 & 12.000 & \bf 10.000 \\
    & Min    & \bf 0.0000 & \bf 0.0000 & 2.0000 & \bf 0.0000 \\
    \\
    4
    & Mean   & 3.9591 & 5.5918 & 6.0816 & \bf 3.5918 \\
    & Median & \bf 2.0000 & 4.0000 & 6.0000 & \bf 2.0000 \\
    & Max    & 12.0000 & 20.0000 & 12.0000 & \bf 10.0000 \\
    & Min    & \bf 0.0000 & \bf 0.0000 & 2.0000 & \bf 0.0000 \\
    \\
    5
    & Mean   & 18.053 & 20.605 & 27.249 & \bf 6.4263 \\
    & Median & 10.000 & 14.000 & 20.000 & \bf 2.0000 \\
    & Max    & 36.000 & 52.000 & 78.000 & \bf 30.0000 \\
    & Min    & \bf 0.0000 & \bf 0.0000 & 2.0000 & \bf 0.0000 \\
    \bottomrule
    \end{tabular}
    \end{center}
\end{table}

Table \ref{table-bgaps} reflects the relations between the solution generated by relaxed models and our models with the Gurobi Optimal solution. Most of them reflect that our approach is closer to Gurobi optimal than Line, Dual, and Augm Models.

\iffalse

\begin{table}[h!]
    \caption{The sample point of gaps in port 1}
    \label{partial-solu}
    \begin{center}
    \begin{tabular}{l|ll}
    Method & Solution & Risk-Variance \\
    \hline \\
    Linear & [0, 0, 0, ..., 0, 1, 0, ..., 1, 1, 1, 0, ..., 0, 0, 1, 1, 1] & 0.000759962 \\
    Dual   & [0, 0, 0, ..., 0, 1, 0, ..., 1, 1, 0, 0, ..., 0, 1, 1, 1, 1] & 0.000650130 \\
    Augm   & [0, 1, 0, ..., 0, 1, 0, ..., 1, 0, 0, 0, ..., 0, 1, 1, 1, 1] & 0.000657009 \\
    Ours   & [0, 0, 0, ..., 0, 1, 0, ..., 1, 1, 0, 0, ..., 0, 1, 1, 1, 1] & \bf 0.000650130 \\
    \end{tabular}
    \end{center}
\end{table}

\fi

\subsubsection{Objective Gap}

However, it is not enough to just analyze the binary gaps considering some solutions with different binaries might reach even better objective values. Therefore, we introduced the objective gap:
\begin{equation}
    \begin{aligned}
        g^\mathrm{Line} & = \frac{obj^\mathrm{Line} - obj^\mathrm{Gurobi}}{obj^\mathrm{Gurobi}} \\
        g^\mathrm{Dual} & = \frac{obj^\mathrm{Dual} - obj^\mathrm{Gurobi}}{obj^\mathrm{Gurobi}} \\
        g^\mathrm{Augm} & = \frac{obj^\mathrm{Augm} - obj^\mathrm{Gurobi}}{obj^\mathrm{Gurobi}} \\
        g^\mathrm{Ours} & = \frac{obj^\mathrm{Ours} - obj^\mathrm{Gurobi}}{obj^\mathrm{Gurobi}} \\
    \end{aligned}
\end{equation}
where $g^{A}$ stands for the objective gaps between Model A and the optimal primal values of Gurobi, $obj$ stands for the objective value, meaning with the binaries solution in each model, adopting them as an upper bound and lower bound, we would eliminate the cardinality constraints and gain a programming without integer. The objective value $obj$ is coming from the programming.

\begin{figure}[h!]
    \label{port1}
    \begin{center}
    %\framebox[4.0in]{$\;$}
    %\fbox{\rule[-.5cm]{0cm}{4cm} \rule[-.5cm]{4cm}{0cm}}
    \includegraphics[width=\linewidth]{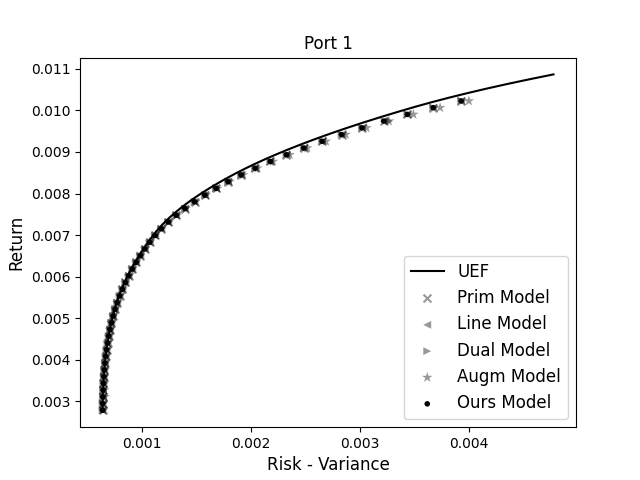}
    \end{center}
    \caption{A schematic visualizing the objective gaps. Supported by Matplotlib.}
\end{figure}

\begin{table}[h!]
    \caption{Objective Gaps of different approaches}
    \label{table-ogaps}
    \vskip 0.15in
    \begin{center}
    \begin{tabular}{llllll}
    \toprule
    g & (\%) & Line & Dual & Augm & Ours \\
    \midrule
    1 
    & Mean   & -0.0134 & 0.0320 & 0.6199 & \bf -0.0139 \\
    & Median & \bf 0.0000 & \bf 0.0000 & 0.4428 & \bf 0.0000 \\
    & Max    & 0.0196 & 0.8544 & 2.8169 & \bf 0.0000 \\
    & Min & \bf -0.1121 & \bf -0.1121 & \bf -0.1121 & \bf -0.1121 \\
    \\
    2
    & Mean   & 3.7400 & 6.4104 & 17.085 & \bf 0.4146 \\
    & Median & 0.5951 & 1.1207 & 10.876 & \bf 0.1993 \\
    & Max    & 25.665 & 68.956 & 89.733 & \bf 8.2481 \\
    & Min    & -0.2863 & -0.2863 & 0.1744 & \bf -28.742 \\
    \\
    3
    & Mean   & 1.8865 & 7.1643 & 7.6136 & \bf 0.7935 \\
    & Median & 0.2503 & 1.1735 & 3.8920 & \bf 0.2287 \\
    & Max    & 20.266 & 42.294 & 34.663 & \bf 7.9213 \\
    & Min    & \bf -0.1837 & \bf -0.1837 & 0.3468 & \bf -0.1837 \\
    \\
    4
    & Mean   & 5.3076 & 11.036 & 11.022 & \bf 1.8449 \\
    & Median & 1.5950 & 5.8239 & 6.5265 & \bf 0.8271 \\
    & Max    & 31.312 & 72.097 & 55.262 & \bf 7.4657 \\
    & Min    & \bf -0.0671 & -0.0585 & 0.0696 & \bf -0.0671 \\
    \\
    5
    & Mean   & 1.4361 & 13.923 & 24.931 & \bf 0.2133 \\
    & Median & 0.7066 & 2.1052 & 13.253 & \bf 0.2098 \\
    & Max    & 18.773 & 53.256 & 60.623 & \bf 10.478 \\
    & Min    & -0.9378 & -0.4020 & 0.3124 & \bf -4.2573 \\
    \bottomrule
    \end{tabular}
    \end{center}
\end{table}

Table \ref{table-ogaps} reflects the relations between objective values generated by relaxed models and our models with the Gurobi Optimal solution. Most of them reflect that our approach is closer to Gurobi optimal than Line, Dual, and Augm Models.

To be more specific, in most cases, our model would reach the best objective values, that is the lowest variance among other relaxation methods. More importantly, the negative objective gaps such as the data in Port 1 reveal our flamework can outperform the Gurobi and approach a better solution, implying a further application for MIP programming usage.

\section{Conclusion}
\label{concl}

In this paper, focusing on the binary variables of MIQP, we discuss the properties and features of exact relaxation models, the Line Model, the Dual Model, and the Augm Model. Then, we design an approach based on the solutions generated by exact relaxation models with heuristic methods, a Genetic Algorithm, and a Neighborhood Search Algorithm. The exact techniques can approach the exact solutions, then with the heuristics strengthen methods, it enables the structure to find the solution around the given solutions. With our approach, we test it with the partial exact relaxation models on the datasets of \citep{chang2000heuristics}, comparing with commercial solver Gurobi and others heuristics by \citet{woodside-oriakhi2011heuristic}, and get a state-of-the-art results. The results show that our approach is effective and can better find the solution for MIQP, hence our approach offers a way for finding solutions by combining exact relaxations and heuristics. Because the pages are limited, we are only able to discuss one application in detail. Still, this method could be able to be generalized into MIP as an overall structure for solving MIP problems. That is, using multiple relaxation methods to generate an initial solution pool and then using the pool with heuristic methods to approach state-of-the-art results. 

\bibliography{ref}

@article{markowitz1952portfolio,
  title = {Portfolio {{Selection}}},
  author = {Markowitz, Harry},
  year = {1952},
  journal = {The Journal of Finance},
  volume = {7},
  number = {1},
  pages = {77--91},
  issn = {1540-6261},
  doi = {10.1111/j.1540-6261.1952.tb01525.x},
  urldate = {2024-08-29},
  copyright = {{\copyright} 1952 the American Finance Association}
}

@article{chang2000heuristics,
  title = {Heuristics for Cardinality Constrained Portfolio Optimisation},
  author = {Chang, T. -J. and Meade, N. and Beasley, J. E. and Sharaiha, Y. M.},
  year = {2000},
  journal = {Computers \& Operations Research},
  volume = {27},
  number = {13},
  pages = {1271--1302},
  issn = {0305-0548},
  doi = {10.1016/S0305-0548(99)00074-X},
  urldate = {2024-06-05}
}

@article{pia2017mixedinteger,
  title = {Mixed-Integer Quadratic Programming Is in {{NP}}},
  author = {Pia, Alberto Del and Dey, Santanu S. and Molinaro, Marco},
  year = {2017},
  journal = {Mathematical Programming},
  volume = {162},
  number = {1},
  pages = {225--240},
  issn = {1436-4646},
  doi = {10.1007/s10107-016-1036-0},
  urldate = {2024-08-29}
}

@article{woodside-oriakhi2011heuristic,
  title = {Heuristic Algorithms for the Cardinality Constrained Efficient Frontier},
  author = {{Woodside-Oriakhi}, M. and Lucas, C. and Beasley, J. E.},
  year = {2011},
  journal = {European Journal of Operational Research},
  volume = {213},
  number = {3},
  pages = {538--550},
  issn = {0377-2217},
  doi = {10.1016/j.ejor.2011.03.030},
  urldate = {2024-05-30}
}

@article{xu2024efficient,
  title = {An {{Efficient Global Optimal Method}} for {{Cardinality Constrained Portfolio Optimization}}},
  author = {Xu, Wei and Tang, Jie and Yiu, Ka Fai Cedric and Peng, Jian Wen},
  year = {2024},
  journal = {INFORMS Journal on Computing},
  volume = {36},
  number = {2},
  pages = {690--704},
  publisher = {INFORMS},
  issn = {1091-9856},
  doi = {10.1287/ijoc.2022.0344},
  urldate = {2024-06-29}
}

@article{markowitz1956optimization,
  title = {The Optimization of a Quadratic Function Subject to Linear Constraints},
  author = {Markowitz, Harry},
  year = {1956},
  journal = {Naval Research Logistics Quarterly},
  volume = {3},
  number = {1-2},
  pages = {111--133},
  issn = {1931-9193},
  doi = {10.1002/nav.3800030110},
  urldate = {2024-09-09},
  copyright = {Copyright {\copyright} 1956 Wiley Periodicals, Inc., A Wiley Company}
}

@article{graziasperanza1996heuristic,
  title = {A Heuristic Algorithm for a Portfolio Optimization Model Applied to the {{Milan}} Stock Market},
  author = {Grazia Speranza, M.},
  year = {1996},
  journal = {Computers \& Operations Research},
  volume = {23},
  number = {5},
  pages = {433--441},
  issn = {0305-0548},
  doi = {10.1016/0305-0548(95)00030-5},
  urldate = {2024-09-10}
}

@article{deng2012markowitzbased,
  title = {Markowitz-Based Portfolio Selection with Cardinality Constraints Using Improved Particle Swarm Optimization},
  author = {Deng, Guang-Feng and Lin, Woo-Tsong and Lo, Chih-Chung},
  year = {2012},
  journal = {Expert Systems with Applications},
  volume = {39},
  number = {4},
  pages = {4558--4566},
  issn = {0957-4174},
  doi = {10.1016/j.eswa.2011.09.129},
  urldate = {2024-08-29}
}

@article{lwin2013hybrid,
  title = {A Hybrid Algorithm for Constrained Portfolio Selection Problems},
  author = {Lwin, Khin and Qu, Rong},
  year = {2013},
  journal = {Applied Intelligence},
  volume = {39},
  number = {2},
  pages = {251--266},
  issn = {1573-7497},
  doi = {10.1007/s10489-012-0411-7},
  urldate = {2024-08-29}
}

@article{tuba2014artificial,
  title = {Artificial {{Bee Colony Algorithm Hybridized}} with {{Firefly Algorithm}} for {{Cardinality Constrained Mean-Variance Portfolio Selection Problem}}},
  author = {Tuba, Milan and Bacanin, Nebojsa},
  year = {2014},
  journal = {Applied Mathematics \& Information Sciences},
  volume = {8},
  number = {6},
  pages = {2831--2844},
  issn = {1935-0090, 2325-0399},
  doi = {10.12785/amis/080619},
  urldate = {2024-08-29}
}

@article{kalayci2017artificial,
  title = {An Artificial Bee Colony Algorithm with Feasibility Enforcement and Infeasibility Toleration Procedures for Cardinality Constrained Portfolio Optimization},
  author = {Kalayci, Can B. and Ertenlice, Okkes and Akyer, Hasan and Aygoren, Hakan},
  year = {2017},
  journal = {Expert Systems with Applications},
  volume = {85},
  pages = {61--75},
  issn = {0957-4174},
  doi = {10.1016/j.eswa.2017.05.018},
  urldate = {2024-08-29}
}

@article{akbay2020parallel,
  title = {A Parallel Variable Neighborhood Search Algorithm with Quadratic Programming for Cardinality Constrained Portfolio Optimization},
  author = {Akbay, Mehmet Anil and Kalayci, Can B. and Polat, Olcay},
  year = {2020},
  journal = {Knowledge-Based Systems},
  volume = {198},
  pages = {105944},
  issn = {0950-7051},
  doi = {10.1016/j.knosys.2020.105944},
  urldate = {2024-08-29}
}

@article{kalayci2020efficient,
  title = {An Efficient Hybrid Metaheuristic Algorithm for Cardinality Constrained Portfolio Optimization},
  author = {Kalayci, Can B. and Polat, Olcay and Akbay, Mehmet A.},
  year = {2020},
  journal = {Swarm and Evolutionary Computation},
  volume = {54},
  pages = {100662},
  issn = {2210-6502},
  doi = {10.1016/j.swevo.2020.100662},
  urldate = {2024-08-29}
}

@article{shaw2008lagrangian,
  title = {Lagrangian Relaxation Procedure for Cardinality-Constrained Portfolio Optimization},
  author = {Shaw, Dong X. and Liu, Shucheng and Kopman, Leonid},
  year = {2008},
  journal = {Optimization Methods and Software},
  volume = {23},
  number = {3},
  pages = {411--420},
  publisher = {Taylor \& Francis},
  issn = {1055-6788},
  doi = {10.1080/10556780701722542},
  urldate = {2024-06-05}
}

@article{guzelsoy2007duality,
  title = {Duality for {{Mixed-Integer Linear Programs}}},
  author = {Guzelsoy, Menal and Ralphs, Theodore K},
  year = {2007},
  journal = {International Journal of Operations Research},
  volume = {4},
  number = {3},
  pages = {118--137}
}

@incollection{guzelsoy2011integer,
  title = {Integer {{Programming Duality}}},
  booktitle = {Wiley {{Encyclopedia}} of {{Operations Research}} and {{Management Science}}},
  author = {Guzelsoy, Menal and Ralphs, Ted K.},
  year = {2011},
  publisher = {John Wiley \& Sons, Ltd},
  doi = {10.1002/9780470400531.eorms0413},
  urldate = {2024-08-28},
  copyright = {Copyright {\copyright} 2010 John Wiley \& Sons, Inc. All rights reserved.},
  isbn = {978-0-470-40053-1}
}

@article{feizollahi2017exact,
  title = {Exact Augmented {{Lagrangian}} Duality for Mixed Integer Linear Programming},
  author = {Feizollahi, Mohammad Javad and Ahmed, Shabbir and Sun, Andy},
  year = {2017},
  journal = {Mathematical Programming},
  volume = {161},
  number = {1},
  pages = {365--387},
  issn = {1436-4646},
  doi = {10.1007/s10107-016-1012-8},
  urldate = {2024-08-28}
}

@article{baykasoglu2015grasp,
  title = {A {{GRASP}} Based Solution Approach to Solve Cardinality Constrained Portfolio Optimization Problems},
  author = {Baykaso{\u g}lu, Adil and Yunusoglu, Mualla Gonca and Burcin {\"O}zsoydan, F.},
  year = {2015},
  journal = {Computers \& Industrial Engineering},
  volume = {90},
  pages = {339--351},
  issn = {0360-8352},
  doi = {10.1016/j.cie.2015.10.009},
  urldate = {2024-08-29}
}

@article{ertenlice2018survey,
  title = {A Survey of Swarm Intelligence for Portfolio Optimization: {{Algorithms}} and Applications},
  shorttitle = {A Survey of Swarm Intelligence for Portfolio Optimization},
  author = {Ertenlice, Okkes and Kalayci, Can B.},
  year = {2018},
  journal = {Swarm and Evolutionary Computation},
  volume = {39},
  pages = {36--52},
  issn = {2210-6502},
  doi = {10.1016/j.swevo.2018.01.009},
  urldate = {2024-09-13}
}

@article{quirynen2024realtime,
  title = {Real-{{Time Mixed-Integer Quadratic Programming}} for {{Vehicle Decision-Making}} and {{Motion Planning}}},
  author = {Quirynen, Rien and Safaoui, Sleiman and Cairano, Stefano Di},
  year = {2024},
  journal = {IEEE Transactions on Control Systems Technology},
  pages = {1--0},
  issn = {1558-0865},
  doi = {10.1109/TCST.2024.3449703},
  urldate = {2024-09-19}
}

@article{ajagekar2022hybrid,
  title = {Hybrid {{Classical-Quantum Optimization Techniques}} for {{Solving Mixed-Integer Programming Problems}} in {{Production Scheduling}}},
  author = {Ajagekar, Akshay and Hamoud, Kumail Al and You, Fengqi},
  year = {2022},
  journal = {IEEE Transactions on Quantum Engineering},
  volume = {3},
  pages = {1--16},
  issn = {2689-1808},
  doi = {10.1109/TQE.2022.3187367},
  urldate = {2024-09-19}
}

@article{anderson2020strong,
  title = {Strong Mixed-Integer Programming Formulations for Trained Neural Networks},
  author = {Anderson, Ross and Huchette, Joey and Ma, Will and Tjandraatmadja, Christian and Vielma, Juan Pablo},
  year = {2020},
  journal = {Mathematical Programming},
  volume = {183},
  number = {1},
  pages = {3--39},
  issn = {1436-4646},
  doi = {10.1007/s10107-020-01474-5},
  urldate = {2024-09-19}
}

@article{ferber2020mipaal,
  title = {{{MIPaaL}}: {{Mixed Integer Program}} as a {{Layer}}},
  shorttitle = {{{MIPaaL}}},
  author = {Ferber, Aaron and Wilder, Bryan and Dilkina, Bistra and Tambe, Milind},
  year = {2020},
  journal = {Proceedings of the AAAI Conference on Artificial Intelligence},
  volume = {34},
  number = {02},
  pages = {1504--1511},
  issn = {2374-3468},
  doi = {10.1609/aaai.v34i02.5509},
  urldate = {2024-09-19},
  copyright = {Copyright (c) 2020 Association for the Advancement of Artificial Intelligence}
}

@inproceedings{wang2022emotion,
  title = {Emotion {{Recognition Based}} on~{{Multi-scale Convolutional Neural Network}}},
  booktitle = {Data {{Mining}} and {{Big Data}}},
  author = {Wang, Zeen},
  editor = {Tan, Ying and Shi, Yuhui},
  year = {2022},
  pages = {152--164},
  publisher = {Springer Nature},
  address = {Singapore},
  doi = {10.1007/978-981-19-9297-1_12},
  copyright = {All rights reserved},
  isbn = {978-981-19929-7-1}
}

@article{wang2024enhancing,
  title = {Enhancing Ocular Diseases Recognition with Domain Adaptive Framework: Leveraging Domain Confusion},
  shorttitle = {Enhancing Ocular Diseases Recognition with Domain Adaptive Framework},
  author = {Wang, Zayn},
  year = {2024},
  journal = {International Journal of Machine Learning and Cybernetics},
  pages = {1--9},
  issn = {1868-808X},
  doi = {10.1007/s13042-024-02358-2},
  urldate = {2024-08-30}
}

@article{fisher1981lagrangian,
  title = {The {{Lagrangian Relaxation Method}} for {{Solving Integer Programming Problems}}},
  author = {Fisher, Marshall L.},
  year = {1981},
  journal = {Management Science},
  volume = {27},
  number = {1},
  pages = {1--18},
  publisher = {INFORMS},
  issn = {0025-1909},
  doi = {10.1287/mnsc.27.1.1},
  urldate = {2024-09-19}
}

@incollection{nemhauser1988duality,
  title = {Duality and {{Relaxation}}},
  booktitle = {Integer and {{Combinatorial Optimization}}},
  author = {Nemhauser, George and Wolsey, Laurence},
  year = {1988},
  pages = {296--348},
  publisher = {John Wiley \& Sons, Ltd},
  doi = {10.1002/9781118627372.ch10},
  urldate = {2024-08-28},
  chapter = {II.3},
  copyright = {Copyright {\copyright} 1999 John Wiley \& Sons, Inc.},
  isbn = {978-1-118-62737-2}
}

@article{li2006optimal,
  title = {Optimal {{Lot Solution}} to {{Cardinality Constrained Mean}}--{{Variance Formulation}} for {{Portfolio Selection}}},
  author = {Li, Duan and Sun, Xiaoling and Wang, Jun},
  year = {2006},
  journal = {Mathematical Finance},
  volume = {16},
  number = {1},
  pages = {83--101},
  issn = {1467-9965},
  doi = {10.1111/j.1467-9965.2006.00262.x},
  urldate = {2024-09-19}
}

@article{bragin2024survey,
  title = {Survey on {{Lagrangian}} Relaxation for {{MILP}}: Importance, Challenges, Historical Review, Recent Advancements, and Opportunities},
  shorttitle = {Survey on {{Lagrangian}} Relaxation for {{MILP}}},
  author = {Bragin, Mikhail A.},
  year = {2024},
  journal = {Annals of Operations Research},
  volume = {333},
  number = {1},
  pages = {29--45},
  issn = {1572-9338},
  doi = {10.1007/s10479-023-05499-9},
  urldate = {2024-12-02}
}
\bibliographystyle{icml2025}

\iffalse
%%%%%%%%%%%%%%%%%%%%%%%%%%%%%%%%%%%%%%%%%%%%%%%%%%%%%%%%%%%%%%%%%%%%%%%%%%%%%%%
%%%%%%%%%%%%%%%%%%%%%%%%%%%%%%%%%%%%%%%%%%%%%%%%%%%%%%%%%%%%%%%%%%%%%%%%%%%%%%%
% APPENDIX
%%%%%%%%%%%%%%%%%%%%%%%%%%%%%%%%%%%%%%%%%%%%%%%%%%%%%%%%%%%%%%%%%%%%%%%%%%%%%%%
%%%%%%%%%%%%%%%%%%%%%%%%%%%%%%%%%%%%%%%%%%%%%%%%%%%%%%%%%%%%%%%%%%%%%%%%%%%%%%%
\newpage
\appendix
\onecolumn
\section{You \emph{can} have an appendix here.}

You can have as much text here as you want. The main body must be at most $8$ pages long.
For the final version, one more page can be added.
If you want, you can use an appendix like this one.  

The $\mathtt{\backslash onecolumn}$ command above can be kept in place if you prefer a one-column appendix, or can be removed if you prefer a two-column appendix.  Apart from this possible change, the style (font size, spacing, margins, page numbering, etc.) should be kept the same as the main body.
%%%%%%%%%%%%%%%%%%%%%%%%%%%%%%%%%%%%%%%%%%%%%%%%%%%%%%%%%%%%%%%%%%%%%%%%%%%%%%%
%%%%%%%%%%%%%%%%%%%%%%%%%%%%%%%%%%%%%%%%%%%%%%%%%%%%%%%%%%%%%%%%%%%%%%%%%%%%%%%
\fi

\end{document}